\def \Z {\mathbb Z}
\def\cC{\mathcal{C}}
\def\cF{\mathcal{F}}
\def\cH{\mathcal{H}}
\def\cL{\mathcal{L}}
\newcommand{\expect}[1]{\ensuremath{\mathbf{E}\left(#1\right)}}
\newcommand{\condexpect}[2]{\ensuremath{\mathbf{E}\big(#1\bigm|#2\big)}}
\def\one{1\!\!1}
\renewcommand{\d}{\,\mathrm d}
\newcommand{\norm}[1]{\left\|\,{#1}\,\right\|}
\newtheorem {lemma}{Lemma}
\newtheorem {proposition}{Proposition}
\newtheorem* {theorem*}{Theorem}
\newtheorem* {thm*}{Theorem}
\newtheorem* {lemma*}{Lemma}
\newtheorem* {lem*}{Lemma}
\newtheorem* {corollary*}{Corollary}
\newtheorem* {cor*}{Corollary}
\newtheorem* {proposition*}{Proposition}
\newtheorem* {prop*}{Proposition}
\newtheorem* {definition*}{Definition}
\newtheorem* {def*}{Definition}
\newtheorem* {conjecture*}{Conjecture}
\newtheorem* {theoremkv*} {Theorem KV}
\newtheorem* {theoremmw*} {Theorem MW}
\newtheorem* {theoremssc*} {Theorem SSC}
\newtheorem* {theoremgsc*} {Theorem GSC}
\theoremstyle{definition}
\newtheorem* {remark*}{Remark}
\newtheorem* {remarks}{Remarks}
\newtheorem* {rem*}{Remark}
\newtheorem* {assumption*}{Assumption}
\def\be{\begin{align}}
\def\ee{\end{align}}
\def\bea{\begin{eqnarray}}
\def\eea{\end{eqnarray}}
\title{Comment on a theorem of M. Maxwell and M. Woodroofe
\footnote{Work partially supported by OTKA (Hungarian National Research Fund) grant K100473.}}
\author{
B\'alint T\'oth
\thanks{Institute of Mathematics, TU Budapest. E-mail: {\tt balint@math.bme.hu}.}
\thanks{School of Mathematics, University of Bristol. E-mail: {\tt balint.toth@bristol.ac.uk}.}
}
\begin{document}

\maketitle

\begin{abstract}

We present a streamlined derivation of the theorem of M. Maxwell and M. Woodroofe,\cite{maxwell_woodroofe_00}, on martingale approximation of additive functionals of stationary Markov processes, from the non-reversible version of the Kipnis-Varadhan theorem.

\end{abstract}

\section{Setup}
\label{s:setup}

Let $(\Omega, \cF, \pi)$ be a probability space: the state space of a \emph{stationary and ergodic} Markov process $t\mapsto\eta(t)$. We put ourselves in the real Hilbert space $\cH:=\cL^2(\Omega, \pi)$, with inner product $(\varphi, \psi):=\int_\Omega \varphi(\omega)\psi(\omega)\d\pi(\omega)$. Denote by $P_t$ the Markov semigroup of conditional expectations acting on $\cH$:
\[
P_t:\cH\to\cH,
\qquad
P_t \varphi (\omega):=\condexpect{\varphi(\eta_t)}{\eta_0=\omega},
\qquad t\ge0.
\]
This is assumed to be a strongly continuous contraction semigroup, whose \emph{infinitesimal generator} is denoted by $G$, which is a well-defined (possibly unbounded) closed linear operator of Hille-Yosida type on $\cH$. It is assumed that there exists a dense core $\cC\subseteq\cH$ on which $G$ is decomposed as
\begin{equation*}
\label{SandA1}
G=-S+A,
\end{equation*}
where $S$ is Hermitian and positive semidefinite, while $A$ is skew-Hermitian:
\begin{equation*}
\label{SandA2}
\forall \varphi,\psi\in\cC:
\qquad
(\varphi, S \psi)=(S \varphi,\psi),
\quad
(\varphi, S \varphi)\ge0,
\quad
(\varphi, A \psi)=-(A \varphi,\psi).
\end{equation*}
Finally, it is assumed that $S$, respectively, $A$ are essentially self-adjoint, respectively, essentially skew-self-adjoint on the core $\cC$. The operator $S^{1/2}$ appearing in the forthcoming arguments is defined in terms of the spectral theorem.

Let $f\in\cH$, be such that $(f, \one) = \int_\Omega f \d\pi=0$, where $\one\in\cL^2(\Omega,\pi)$ is the constant function $\one(\omega)\equiv1$. We ask about CLT/invariance principle, as $N\to\infty$, for
\[
N^{-1/2}\int_0^{Nt} f(\eta(s))\d s.
\]

We denote:
\begin{align*}
R_\lambda
&
:=
\int_0^\infty e^{-\lambda s} P_s \d s
=
\big(\lambda I-G\big)^{-1},
&&
u_\lambda:=R_\lambda f,
&&
\lambda>0,
\\[8pt]
V_t
&
:=
\int_0^t P_s \d s = G^{-1}(I-P_t),
&&
v_t:=V_t f,
&&
t>0.
\end{align*}

Recall the non-reversible version of the Kipnis-Varadhan theorem and the theorem of Maxwell and Woodroofe about the CLT problem mentioned above:

\begin{theoremkv*}
\label{thm:kv}
With the notation and assumptions as before, if the following two limits hold in $\cH$ (in norm topology):
\begin{align}
\label{conditionKV1}
&
\lim_{\lambda\to0}
\lambda^{1/2} u_\lambda=0,
\\[8pt]
\label{conditionKV2}
&
\lim_{\lambda\to0} S^{1/2} u_\lambda=:w\in\cH,
\end{align}
then
\[
\sigma^2:=2\lim_{\lambda\to0}(u_\lambda,f)=2\norm{w}^2\in[0,\infty),
\]
exists, and there also exists a zero mean, $\cL^2$-martingale $M(t)$ adapted to the filtration of the Markov process $\eta(t)$, with stationary and ergodic increments, and variance
\[
\expect{M(t)^2}=\sigma^2t,
\]
such that
\[
\lim_{N\to\infty} N^{-1} \expect{\big(\int_0^N
f(\eta(s)) \d s-M(N)\big)^2} =0.
\]
In particular, if $\sigma>0$, then the finite dimensional marginal distributions of the rescaled process $t\mapsto \sigma^{-1} N^{-1/2}\int_0^{Nt}f(\eta(s))\d s$ converge to those of a standard $1d$ Brownian motion.
\end{theoremkv*}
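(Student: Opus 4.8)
The plan is to realize the additive functional as a martingale up to negligible boundary terms, following the resolvent route of Kipnis and Varadhan, but exploiting that the skew part $A$ drops out of every real quadratic form that controls the variance. Since $u_\lambda=R_\lambda f\in\Dom(G)$ and $(\lambda I-G)u_\lambda=f$, i.e. $G u_\lambda=\lambda u_\lambda-f$, Dynkin's formula applied to the fixed function $u_\lambda$ produces a zero-mean martingale
\begin{equation*}
M_\lambda(t):=u_\lambda(\eta(t))-u_\lambda(\eta(0))-\int_0^t (G u_\lambda)(\eta(s))\d s,
\end{equation*}
adapted to the filtration of $\eta$, with stationary and ergodic increments inherited from the process. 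Substituting $Gu_\lambda=\lambda u_\lambda-f$ and rearranging gives the exact identity
\begin{equation*}
\int_0^t f(\eta(s))\d s=M_\lambda(t)+\big(u_\lambda(\eta(0))-u_\lambda(\eta(t))\big)+\lambda\int_0^t u_\lambda(\eta(s))\d s,
\end{equation*}
valid for every $\lambda>0$, $t>0$. A first payoff is the variance formula: writing $f=\lambda u_\lambda+S u_\lambda-A u_\lambda$ and pairing with $u_\lambda$, the skew term vanishes, so $(u_\lambda,f)=\lambda\norm{u_\lambda}^2+\norm{S^{1/2}u_\lambda}^2$, whence by \eqref{conditionKV1} and \eqref{conditionKV2} one gets $\sigma^2=2\lim_{\lambda\to0}(u_\lambda,f)=2\norm{w}^2$.

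Next I would construct the limiting martingale. By the same skew-symmetry, the carr\'e du champ computation yields $\expect{M_\lambda(t)^2}=-2t(u_\lambda,G u_\lambda)=2t\norm{S^{1/2}u_\lambda}^2$, and more generally, since $M_\lambda-M_{\lambda'}$ is the martingale attached to $u_\lambda-u_{\lambda'}$,
\begin{equation*}
\expect{\big(M_\lambda(t)-M_{\lambda'}(t)\big)^2}=2t\,\norm{S^{1/2}(u_\lambda-u_{\lambda'})}^2.
\end{equation*}
Condition \eqref{conditionKV2} makes $S^{1/2}u_\lambda$ a Cauchy family as $\lambda\to0$, so for each fixed $t$ the family $M_\lambda(t)$ is Cauchy in $\cH$; Doob's inequality upgrades this to convergence, uniformly on compact time intervals, to a martingale $M(t)$ with stationary ergodic increments and $\expect{M(t)^2}=2t\norm{w}^2=\sigma^2 t$.

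It remains to show that $M$ is the required approximant, i.e. that the error terms are negligible after the diffusive rescaling. Taking $t=N$ and comparing with $M(N)$,
\begin{equation*}
\int_0^N f(\eta(s))\d s-M(N)=\big(M_\lambda(N)-M(N)\big)+\big(u_\lambda(\eta(0))-u_\lambda(\eta(N))\big)+\lambda\int_0^N u_\lambda(\eta(s))\d s.
\end{equation*}
I would estimate the three contributions in $\cL^2$: the first is $2N\norm{S^{1/2}u_\lambda-w}^2$; the two boundary terms are each bounded by $\norm{u_\lambda}^2$ using stationarity; and the last is at most $\lambda^2N^2\norm{u_\lambda}^2$ via $(u_\lambda,P_s u_\lambda)\le\norm{u_\lambda}^2$. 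Dividing by $N$ and choosing the diagonal coupling $\lambda=1/N$ turns these into $2\norm{S^{1/2}u_{1/N}-w}^2$ and multiples of $\lambda\norm{u_\lambda}^2=\norm{\lambda^{1/2}u_\lambda}^2$, all of which vanish as $N\to\infty$ by \eqref{conditionKV2} and \eqref{conditionKV1}. This gives $\lim_{N\to\infty}N^{-1}\expect{(\int_0^N f(\eta(s))\d s-M(N))^2}=0$.

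Finally, the CLT and convergence of finite-dimensional marginals follow from the functional martingale central limit theorem for stationary, ergodic $\cL^2$-martingale increments (yielding $\sigma^{-1}N^{-1/2}M(N\,\cdot\,)\Rightarrow$ standard Brownian motion when $\sigma>0$), transported to the additive functional by the $\cL^2$ approximation just established. The main obstacle I anticipate is not any single estimate but the joint calibration in the penultimate step: the boundary and resolvent-drift terms are controlled only by \eqref{conditionKV1} while the martingale error is controlled only by \eqref{conditionKV2}, so one must check that the single choice $\lambda=1/N$ simultaneously annihilates all three contributions — which is precisely what the two Kipnis--Varadhan hypotheses are tailored to guarantee.
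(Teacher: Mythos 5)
The paper does not actually prove Theorem KV: it is recalled verbatim from \cite{kipnis_varadhan_86} and \cite{toth_86} and used as a black box to derive Theorem MW, so there is no in-paper proof to compare you against. Your argument is precisely the standard resolvent proof from those references --- the Dynkin martingale attached to $u_\lambda$, the skew part $A$ dropping out of every quadratic form so that $(u_\lambda,f)=\lambda\norm{u_\lambda}^2+\norm{S^{1/2}u_\lambda}^2$, the Cauchy property of $M_\lambda$ driven by \eqref{conditionKV2}, and the boundary and resolvent-drift terms killed by \eqref{conditionKV1} under the coupling $\lambda=1/N$ --- and it is correct as a sketch; the only points needing extra care in a full write-up are the domain issues (justifying $(u_\lambda,-Gu_\lambda)=\norm{S^{1/2}u_\lambda}^2$ and the carr\'e du champ identity for $u_\lambda$, which need not lie in the core $\cC$) and the stationarity and ergodicity of the increments of the limit martingale $M$.
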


Conditions \eqref{conditionKV1} and \eqref{conditionKV2} of Theorem KV are jointly equivalent to the following
\begin{align}
\label{conditionBT}
\lim_{\lambda,\lambda'\to0}(\lambda+\lambda')(u_\lambda,u_{\lambda'})=0.
\end{align}
Indeed, straightforward computations yield:
\begin{align}
\label{KV1+KV2=BT}
(\lambda+\lambda')(u_\lambda,u_{\lambda'}) =
\norm{S^{1/2}(u_\lambda-u_{\lambda'})}^2 + \lambda \norm{u_\lambda}^2 +
\lambda' \norm{u_{\lambda'}}^2.
\end{align}

\begin{theoremmw*}
\label{thm:mw}
With the notation and assumptions as before, if:
\begin{align}
\label{conditionMW}
\int_0^\infty t^{-3/2}\norm{v_t} \d t
<\infty,
\end{align}
then the martingale approximation and CLT from Theorem KV hold.
\end{theoremmw*}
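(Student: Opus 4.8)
The plan is to verify the two hypotheses \eqref{conditionKV1}--\eqref{conditionKV2} of Theorem KV --- equivalently, the single condition \eqref{conditionBT} --- and then quote Theorem KV verbatim. Two structural facts drive everything. First, integrating by parts in $u_\lambda=\int_0^\infty e^{-\lambda s}P_sf\,\d s$, using $v_0=0$ and the crude bound $\norm{v_t}\le t\norm{f}$ to kill the boundary term at infinity, yields the representation
\[
u_\lambda=\lambda\int_0^\infty e^{-\lambda t}v_t\,\d t .
\]
Second, the cocycle identity $v_{s+t}=v_s+P_sv_t$ together with $\norm{P_s}\le1$ shows that $t\mapsto\norm{v_t}$ is subadditive. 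The function of \eqref{conditionMW} is that $\d\mu(t):=t^{-3/2}\norm{v_t}\,\d t$ is a \emph{finite} measure on $(0,\infty)$, and it is this finiteness that I will feed into dominated convergence.

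With these in hand, \eqref{conditionKV1} is immediate: from the representation and $\norm{P_s}\le1$,
\[
\lambda^{1/2}\norm{u_\lambda}\le\lambda^{3/2}\int_0^\infty e^{-\lambda s}\norm{v_s}\,\d s=\int_0^\infty K(\lambda s)\,\d\mu(s),\qquad K(x):=x^{3/2}e^{-x},
\]
and since $K$ is bounded with $K(0)=0$, the integrand tends to $0$ pointwise and is dominated by $\norm{K}_\infty$, so dominated convergence against the finite measure $\mu$ gives $\lambda^{1/2}\norm{u_\lambda}\to0$. The same circle of ideas yields the sharper estimate $\norm{v_t}=\ordo(t^{1/2})$, which I shall need below: averaging the subadditivity inequality over the split point gives $\norm{v_s}\le\frac2s\int_0^s\norm{v_a}\,\d a$, and writing $\int_0^s\norm{v_a}\,\d a=\int_0^s a^{3/2}\,\d\mu(a)$, integrating by parts, and invoking the vanishing tail $\mu\big((a,\infty)\big)\to0$ forces $s^{-3/2}\int_0^s\norm{v_a}\,\d a\to0$ by a Cesàro argument, whence $\norm{v_s}=\ordo(s^{1/2})$.

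The substance is \eqref{conditionKV2}, equivalently the Cauchy property of $S^{1/2}u_\lambda$; by the identity \eqref{KV1+KV2=BT} and the bound $\lambda\norm{u_\lambda}^2=(\lambda^{1/2}\norm{u_\lambda})^2\to0$ just obtained, this is exactly \eqref{conditionBT}. Inserting the representation into $(\lambda+\lambda')(u_\lambda,u_{\lambda'})$ and polarizing $(v_s,v_t)=\tfrac12\big(\norm{v_s}^2+\norm{v_t}^2-\norm{v_s-v_t}^2\big)$, the skew part $A$ disappears (the antisymmetry $(Av_s,v_t)+(v_s,Av_t)=0$ kills it against the symmetric kernel). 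Using the cocycle in the form $v_s-v_t=\pm P_{s\wedge t}v_{|s-t|}$ and the majorization $\norm{P_{s\wedge t}v_{|s-t|}}\le\norm{v_{|s-t|}}$, the leading combination of the square terms collapses to
\[
\lambda^2\int_0^\infty e^{-\lambda s}\norm{v_s}^2\,\d s+\lambda'^2\int_0^\infty e^{-\lambda's}\norm{v_s}^2\,\d s ,
\]
and, since $\norm{v_s}^2=\ordo(s)$, each summand tends to $0$ by dominated convergence after the rescaling $s\mapsto s/\lambda$ --- crucially, \emph{uniformly in the ratio $\lambda/\lambda'$}, because the two summands decouple.

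I expect the genuine obstacle to be the residual defect left by the majorization above, namely the nonnegative quantity $\norm{v_{|s-t|}}^2-\norm{v_s-v_t}^2=\big(v_{|s-t|},(I-P_{s\wedge t}^*P_{s\wedge t})v_{|s-t|}\big)$, integrated against $(\lambda+\lambda')\lambda\lambda'\,e^{-\lambda s-\lambda't}$. This is where the contractivity of the semigroup genuinely enters, and it cannot be estimated termwise: the naive Cauchy--Schwarz bound $\abs{(v_s,P_sv_{t-s})}\le\norm{v_s}\,\norm{v_{t-s}}$ is too lossy and in fact diverges once $\lambda$ and $\lambda'$ separate, so the required cancellation must be extracted from the cocycle itself rather than from the norms. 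The guiding picture is the reversible case $G=-S$, where the spectral measure $\rho_f$ of $S$ at $f$ gives $\norm{S^{1/2}(u_\lambda-u_{\lambda'})}^2=\int_0^\infty x\big(\tfrac1{\lambda+x}-\tfrac1{\lambda'+x}\big)^2\,\d\rho_f(x)$, which is controlled uniformly in $\lambda,\lambda'$ by dominated convergence with the integrable majorant $x^{-1}$ --- and $x^{-1}\in L^1(\rho_f)$ is precisely the content of \eqref{conditionMW} there. The technical heart of the argument is thus to reproduce such a ratio-uniform dominated-convergence estimate in the non-reversible setting, with the finite measure $\mu$ playing the role of $\rho_f$.
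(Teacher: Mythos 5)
Your verification of \eqref{conditionKV1} is fine, and the representation $u_\lambda=\lambda\int_0^\infty e^{-\lambda t}v_t\,\d t$ together with the finiteness of the measure $\d\mu(t)=t^{-3/2}\norm{v_t}\d t$ is indeed the right way to bring hypothesis \eqref{conditionMW} into play. But there is a genuine gap exactly where you flag it: condition \eqref{conditionKV2} (equivalently \eqref{conditionBT}) is never established. Your plan --- insert the double Laplace representation into $(\lambda+\lambda')(u_\lambda,u_{\lambda'})$, polarize, and run a dominated-convergence argument uniform in the ratio $\lambda/\lambda'$ --- stalls precisely at the ``residual defect'' you describe, and you offer no mechanism to control it. Moreover, even the part you claim works is doubtful: after polarization the square terms contribute $\tfrac12(\lambda+\lambda')\lambda\int_0^\infty e^{-\lambda s}\norm{v_s}^2\d s$ plus its primed counterpart, and the cross piece $\lambda\lambda'\int_0^\infty e^{-\lambda s}\norm{v_s}^2\d s$ is of order $(\lambda'/\lambda)\cdot\ordo(1)$, which blows up as $\lambda'/\lambda\to\infty$; so the cancellation against the $\norm{v_s-v_t}^2$ terms is essential, not a lower-order correction, and it is left unproven. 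As written, the second half of your argument is a statement of what remains to be done rather than a proof.

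The missing idea --- and the way the paper closes the argument --- is to stop trying to control all pairs $(\lambda,\lambda')$ directly. Applying Schwarz to the identity \eqref{KV1+KV2=BT} yields the a priori bound
\begin{equation*}
2\norm{S^{1/2}(u_\lambda-u_{\lambda'})}^2
\le
(\lambda-\lambda')\big(\norm{u_{\lambda'}}^2-\norm{u_\lambda}^2\big)
\le
\lambda\norm{u_{\lambda'}}^2+\lambda'\norm{u_\lambda}^2,
\end{equation*}
which simultaneously shows that $\lambda\mapsto\norm{u_\lambda}$ is monotone decreasing and reduces the Cauchy property of $S^{1/2}u_\lambda$ to the one-parameter summability \eqref{sumcondition} along a geometric sequence $\lambda_k=\delta^k$: telescoping gives convergence of $S^{1/2}u_{\lambda_k}$, and the same bound interpolates between consecutive $\lambda_k$ (this is Proposition~\ref{prop}). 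That summability is then exactly what \eqref{conditionMW} delivers through your representation of $u_\lambda$, via the elementary estimate $\sup_{t}\sum_{k}(t\delta^k)^{3/2}e^{-t\delta^k}<\infty$. With this route the subadditivity estimate $\norm{v_t}=\ordo(t^{1/2})$ and the polarization of $(v_s,v_t)$ are not needed at all.
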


\begin{remarks}

\begin{enumerate}[$\circ$]

\item
The reversible version (when $A=0$) of Theorem KV appears in the celebrated paper \cite{kipnis_varadhan_86}. In that case conditions \eqref{conditionKV1} and \eqref{conditionKV2} are equivalent and  the proof relies on spectral calculus. The non-reversible formulation of Theorem KV appears -- in discrete-time Markov chain, rather than continuous-time Markov process setup and with condition \eqref{conditionBT} -- in \cite{toth_86}. Its proof follows the original proof from \cite{kipnis_varadhan_86}, with spectral calculus methods replaced by resolvent calculus.

\item
Theorem MW appears in \cite{maxwell_woodroofe_00}. Its proof contains elements in common with the arguments of the proof of Theorem KV. However, in the original formulation it's not transparent that Theorem MW is actually a direct consequence of  Theorem KV.

\item
For full historical record of the circle of ideas and results related to Theorem KV (as, e.g., the various sector conditions) and a wide range of applications to tagged particle diffusion in interacting particle systems, random walks and diffusions in random environment, other random walks and diffusions with long memory, etc., see the recent monograph \cite{komorowski_landim_olla_12}.

\end{enumerate}

\end{remarks}

\section{Theorem MW from Theorem KV}
\label{s:MW_from_KV}

\begin{proposition}
\label{prop}
If there exists a decreasing sequence $\lambda_k\searrow 0$ such that
\begin{align}
\label{sumcondition}
\sum_{k=1}^\infty
\sqrt{\lambda_{k-1}}\norm{u_{\lambda_k}}<\infty,
\end{align}
then conditions \eqref{conditionKV1} and \eqref{conditionKV2} of Theorem KV hold.
\end{proposition}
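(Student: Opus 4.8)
The plan is to upgrade the two conditions \eqref{conditionKV1} and \eqref{conditionKV2} from the sequence $(\lambda_k)$ to the full limit $\lambda\to0$ by turning the discrete sum in \eqref{sumcondition} into a convergent integral. Two a priori facts drive everything, and both come from a single energy identity. First, for arbitrary $g\in\cH$ and $\lambda>0$, setting $h=R_\lambda g$ so that $(\lambda-G)h=g$ and pairing with $h$, the skew-symmetry of $A$ (which gives $(Ah,h)=0$) together with $(-Gh,h)=\norm{S^{1/2}h}^2$ yields the identity $\lambda\norm{h}^2+\norm{S^{1/2}h}^2=(g,h)$. Both left-hand terms being non-negative, this gives $\norm{h}\le\lambda^{-1}\norm{g}$ and hence the operator bound $\norm{S^{1/2}R_\lambda g}\le\lambda^{-1/2}\norm{g}$. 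Second, differentiating and using $u_\lambda'=-R_\lambda u_\lambda$, the same identity applied to $g=u_\lambda$ (with $p=R_\lambda u_\lambda$) gives $\tfrac{d}{d\lambda}\norm{u_\lambda}^2=-2(u_\lambda,p)=-2(\lambda\norm{p}^2+\norm{S^{1/2}p}^2)\le0$, so $\lambda\mapsto\norm{u_\lambda}$ is non-increasing.

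Granting these, I would study the $\cH$-valued curve $\lambda\mapsto S^{1/2}u_\lambda$. From $u_\lambda'=-R_\lambda u_\lambda$ and the operator bound, its derivative satisfies $\norm{\tfrac{d}{d\lambda}S^{1/2}u_\lambda}=\norm{S^{1/2}R_\lambda u_\lambda}\le\lambda^{-1/2}\norm{u_\lambda}$. By the fundamental theorem of calculus, for $0<\lambda<\lambda'$ one then has $\norm{S^{1/2}(u_{\lambda'}-u_\lambda)}\le\int_\lambda^{\lambda'}s^{-1/2}\norm{u_s}\,ds$, so it suffices to prove that $\int_0^{\lambda_0}s^{-1/2}\norm{u_s}\,ds<\infty$. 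Here the monotonicity enters: on each interval $[\lambda_k,\lambda_{k-1}]$ we have $\norm{u_s}\le\norm{u_{\lambda_k}}$, while $\int_{\lambda_k}^{\lambda_{k-1}}s^{-1/2}\,ds=2(\sqrt{\lambda_{k-1}}-\sqrt{\lambda_k})\le2\sqrt{\lambda_{k-1}}$, whence $\int_0^{\lambda_0}s^{-1/2}\norm{u_s}\,ds\le2\sum_{k\ge1}\sqrt{\lambda_{k-1}}\norm{u_{\lambda_k}}<\infty$ by \eqref{sumcondition}. It is precisely the index $k-1$ in the hypothesis that is produced by the length of the integration interval. Consequently $S^{1/2}u_\lambda$ is Cauchy as $\lambda\to0$ and converges to some $w\in\cH$, which is \eqref{conditionKV2}.

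Condition \eqref{conditionKV1} then follows directly from monotonicity: given small $\lambda$, choose $k$ with $\lambda_k\le\lambda<\lambda_{k-1}$; since $\norm{u_\lambda}\le\norm{u_{\lambda_k}}$, we get $\sqrt{\lambda}\,\norm{u_\lambda}\le\sqrt{\lambda_{k-1}}\,\norm{u_{\lambda_k}}$, and the right-hand side is a term of the convergent series \eqref{sumcondition}, hence tends to $0$ as $\lambda\to0$ (equivalently $k\to\infty$). Thus $\lambda^{1/2}u_\lambda\to0$, completing both conditions.

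The step I expect to need the most care is the analytic justification that $\lambda\mapsto S^{1/2}u_\lambda$ is strongly differentiable with derivative $-S^{1/2}R_\lambda u_\lambda$, so that the Bochner form of the fundamental theorem of calculus is legitimate. This is where closedness of $S^{1/2}$ and the boundedness and continuity in $\lambda$ of $S^{1/2}R_\lambda$—both furnished by the operator bound above together with the resolvent identity—are used; once these are in place, the remaining estimates are routine.
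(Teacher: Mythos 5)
Your proof is correct, and it reaches the same two key facts as the paper --- monotonicity of $\lambda\mapsto\norm{u_\lambda}$ and a quantitative Cauchy estimate for $S^{1/2}u_\lambda$ --- but by a genuinely different route. The paper derives both from the purely algebraic two-point resolvent identity \eqref{KV1+KV2=BT} and its consequence \eqref{auxbound}, obtaining $\norm{S^{1/2}(u_{\lambda_k}-u_{\lambda_{k-1}})}\le\sqrt{\lambda_{k-1}}\,\norm{u_{\lambda_k}}$ as a finite-difference inequality with no differentiation anywhere; it then needs the extra interpolation step \eqref{inbetween2} to pass from the sequence $\lambda_k$ to the full limit $\lambda\to0$. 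You instead differentiate the curve $\lambda\mapsto S^{1/2}u_\lambda$, bound the derivative by $\lambda^{-1/2}\norm{u_\lambda}$ via the energy identity, and integrate; your bound $\norm{S^{1/2}(u_{\lambda'}-u_\lambda)}\le\int_\lambda^{\lambda'}s^{-1/2}\norm{u_s}\d s$ is the integrated form of the paper's \eqref{auxbound} (up to a factor $2$), and it has the small advantage of giving the full continuum limit in \eqref{conditionKV2} directly, with no separate interpolation step. The price is exactly the point you flag: you must justify strong differentiability of $S^{1/2}R_\lambda f$ in $\lambda$ and the Bochner fundamental theorem of calculus, which requires the resolvent identity, closedness of $S^{1/2}$, and the uniform bound $\norm{S^{1/2}R_\lambda}\le\lambda^{-1/2}$; these are all available under the paper's standing assumptions, but the paper's finite-difference argument sidesteps this machinery entirely, which is why it is shorter. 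Your treatment of \eqref{conditionKV1} coincides with the paper's: monotonicity plus the fact that the terms of the convergent series \eqref{sumcondition} tend to zero.
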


\begin{remark*}

\begin{enumerate}[$\circ$]

\item
Proposition \ref{prop} also sheds some light on the conditions of Theorem KV: It shows that \eqref{conditionKV1} alone is just marginally short of being sufficient.

\end{enumerate}

\end{remark*}

\begin{proof}[Proof of Proposition \ref{prop}]

Note first that from \eqref{KV1+KV2=BT}, by Schwarz's inequality it follows that
\begin{align}
\label{auxbound}
2\norm{S^{1/2}(u_\lambda-u_{\lambda'})}^2
\le
(\lambda-\lambda')
(\norm{u_{\lambda'}}^2 - \norm{u_\lambda}^2)
\le
\lambda\norm{u_{\lambda'}}^2
+
\lambda'\norm{u_\lambda}^2.
\end{align}
Hence, $\lambda\mapsto \norm{u_\lambda}$ is monotone decreasing and
\begin{equation}
\label{inbetween1}
\max_{\lambda_k\le \lambda\le \lambda_{k-1}} \sqrt{\lambda}\norm{u_\lambda} \le \sqrt{\lambda_{k-1}} \norm{u_{\lambda_k}}.
\end{equation}
The summability condition \eqref{sumcondition} and the bound \eqref{inbetween1} clearly imply \eqref{conditionKV1}.

From \eqref{auxbound} we also get
\[
\label{crossbound_to_seq}
\norm{ S^{1/2} (u_{\lambda_{k}} - u_{\lambda_{k-1}}) }
\le
\sqrt{\lambda_{k-1}} \norm{u_{\lambda_k}}.
\]
Hence, by the assumption \eqref{sumcondition}
\[
\sum_{k=1}^\infty \norm{S^{1/2}(u_{\lambda_{k}}-u_{\lambda_{k-1}})}<\infty,
\]
and thus
\begin{align}
\label{limit_for_seq}
\lim_{k\to\infty}S^{1/2}u_{\lambda_{k}} =: w\in\cH
\end{align}
exists. Now, using again \eqref{auxbound} we have
\begin{equation}
\label{inbetween2}
\lim_{k\to\infty}
\max_{\lambda_k\le \lambda\le \lambda_{k-1}}
\norm{S^{1/2}(u_{\lambda_{k}}-u_{\lambda})}
\le
\lim_{k\to\infty}
\sqrt{\lambda_{k-1}}\norm{u_{\lambda_k}}
=0.
\end{equation}
Finally, \eqref{limit_for_seq} and \eqref{inbetween2} jointly yield \eqref{conditionKV2}.

\end{proof}

The following is essentially Lemma 1 from \cite{maxwell_woodroofe_00}. We reproduce it only for sake of completeness.

\begin{lemma}
\label{lem:mw}
Condition \eqref{conditionMW} of Theorem MW implies the summability condition \eqref{sumcondition} of Proposition \ref{prop}, with any exponential sequence $\lambda_k=\delta^k$, $\delta\in(0,1)$.
\end{lemma}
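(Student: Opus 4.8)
The plan is to pass from the resolvent vectors $u_\lambda$ to the time-integrals $v_s$ through the elementary Laplace identity $u_\lambda=\lambda\int_0^\infty e^{-\lambda s}v_s\,\d s$, and then to recognize the sum $\sum_k\sqrt{\lambda_{k-1}}\,\norm{u_{\lambda_k}}$ as a single integral of $\norm{v_s}$ against a kernel that is comparable to $s^{-3/2}$ --- precisely the weight appearing in \eqref{conditionMW}. The whole point will be that sampling at a \emph{geometric} sequence $\lambda_k=\delta^k$ produces exactly this kernel.

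First I would establish the representation. Since $v_s=\int_0^s P_r f\,\d r$ and $\norm{P_r f}\le\norm{f}$, the double integral converges absolutely (because $\norm{v_s}\le s\norm{f}$), so Fubini applies to the Bochner integrals and gives
\[
\lambda\int_0^\infty e^{-\lambda s}v_s\,\d s=\lambda\int_0^\infty P_r f\Big(\int_r^\infty e^{-\lambda s}\,\d s\Big)\d r=\int_0^\infty e^{-\lambda r}P_r f\,\d r=u_\lambda,
\]
valid for every $\lambda>0$ with no decay hypothesis on $v_s$; in particular $\norm{u_\lambda}\le\lambda\int_0^\infty e^{-\lambda s}\norm{v_s}\,\d s$. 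Taking $\lambda_k=\delta^k$ and using $\sqrt{\lambda_{k-1}}\,\lambda_k=\delta^{-1/2}\lambda_k^{3/2}$, I would sum this bound and interchange sum and integral (legitimate by Tonelli, since all terms are nonnegative):
\[
\sum_{k=1}^\infty\sqrt{\lambda_{k-1}}\,\norm{u_{\lambda_k}}\le\delta^{-1/2}\int_0^\infty\norm{v_s}\,K(s)\,\d s,\qquad K(s):=\sum_{k=1}^\infty\lambda_k^{3/2}e^{-\lambda_k s}.
\]

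The crux is then the uniform kernel bound $K(s)\le C_\delta\,s^{-3/2}$ with $C_\delta<\infty$. Writing $K(s)=s^{-3/2}\sum_{k\ge1}h(\delta^k s)$ with $h(x)=x^{3/2}e^{-x}$, the residual sum is a sampling of $h$ along the geometric grid $\{\delta^k s\}_{k\ge1}$, i.e.\ along a lattice of spacing $\abs{\log\delta}$ on the logarithmic scale. Since $h$ is bounded, unimodal (peak at $x=3/2$), and log-integrable, $\int_0^\infty h(x)\,x^{-1}\d x=\int_0^\infty x^{1/2}e^{-x}\d x=\Gamma(3/2)<\infty$, this lattice sum is bounded uniformly in the offset $\log s$, which gives $C_\delta<\infty$ (morally $C_\delta\approx\abs{\log\delta}^{-1}\Gamma(3/2)$ plus a peak correction). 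This uniform-in-$s$ control of $K$ is the one genuine obstacle: one must see that geometric sampling loses only a multiplicative constant and never blows up, and this is exactly where the exponent $3/2$ of $K$ aligns with the $t^{-3/2}$ of \eqref{conditionMW}.

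Finally I would combine the two displays to obtain
\[
\sum_{k=1}^\infty\sqrt{\lambda_{k-1}}\,\norm{u_{\lambda_k}}\le\delta^{-1/2}C_\delta\int_0^\infty s^{-3/2}\norm{v_s}\,\d s<\infty,
\]
the last integral being finite by hypothesis \eqref{conditionMW}. I note that the contribution near $s=0$ is harmless independently of the hypothesis, since $\norm{v_s}\le s\norm{f}$ makes $s^{-3/2}\norm{v_s}\le s^{-1/2}\norm{f}$ integrable there; the content of \eqref{conditionMW} is entirely at $s\to\infty$. This is the summability condition \eqref{sumcondition} of Proposition \ref{prop}, for the exponential sequence $\lambda_k=\delta^k$, as claimed.
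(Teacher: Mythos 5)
Your proposal is correct and follows essentially the same route as the paper: the Laplace representation $u_\lambda=\lambda\int_0^\infty e^{-\lambda s}v_s\,\d s$, interchange of sum and integral, and a uniform bound on the kernel $\sum_k(\delta^k s)^{3/2}e^{-\delta^k s}$ obtained by comparing the geometric-lattice sum of the unimodal function $x\mapsto x^{3/2}e^{-x}$ to its integral plus a peak term (the paper makes this explicit by splitting the sum at the maximizing index $k^*$ and bounding each side by a Riemann sum, yielding the constant $(3/2e)^{3/2}+\sqrt{\pi}/(2(1-\delta))$). No gaps; your log-scale phrasing of the kernel estimate is just a cosmetic variant of the paper's argument.
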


\begin{proof}[Proof of Lemma \ref{lem:mw}]

This is straightforward computation. Note first that
\[
u_\lambda=\lambda\int_0^\infty e^{-\lambda t} v_t \d t,
\qquad
\norm{u_\lambda}
\le
\lambda \int_0^\infty e^{-\lambda t} \norm{v_t} \d t.
\]
Thus,
\begin{align}
\label{what}
\sum_{k=0}^\infty \delta^{k/2} \norm{u_{\delta^{k}}}
&
\le
\int_0^\infty \left(\sum_{k=0}^\infty (t \delta^k)^{3/2} e^{-t \delta^k}\right) t^{-3/2}\norm{v_t} \d t.
\end{align}
Next we prove that for any $\delta\in(0,1)$
\begin{align}
\label{gamma}
\sup_{0\le t <\infty}
\sum_{k=-\infty}^\infty (t \delta^k)^{3/2} e^{-t \delta^k}
\le
\left(\frac{3}{2e}\right)^{3/2} + \frac{\sqrt{\pi}}{2(1-\delta)}.
\end{align}
From \eqref{what} and \eqref{gamma} the statement of the lemma follows.

Fix $t\in[0,\infty)$, $\delta\in(0,1)$ and denote $u_k:= t \delta^k$. Since the function $[0,\infty)\ni u\mapsto u^{1/2}e^{-u}$ is \emph{strictly unimodular}, there exists a \emph{unique} $k^*=k^*(t,\delta)\in\Z$ such that
\[
u_k^{1/2} e^{-u_k}
=
\begin{cases}
\displaystyle
\min_{u_{k+1}\le u\le u_{k}} u^{1/2}e^{-u}
& \text{ if } k<k^*,
\\
\displaystyle
\min_{u_{k}\le u\le u_{k-1}} u^{1/2}e^{-u}
& \text{ if } k>k^*.
\end{cases}
\]
Then the sum on the left hand side of  \eqref{gamma} is:
\begin{align*}
&
\sum_{k=-\infty}^\infty u_k^{3/2} e^{-u_k}
=
\\
&\hskip1cm
\frac{1}{1-\delta}
\sum_{k=-\infty}^{k^*-1} (u_k-u_{k+1})u_k^{1/2} e^{-u_k}
+u_{k^*}^{3/2} e^{-u_{k^*}}+
\frac{\delta}{1-\delta}
\sum_{k=k^*+1}^{\infty} (u_{k-1}-u_k)u_k^{1/2} e^{-u_k}
\le
\\
&\hskip1cm
\sup_{0\le u \le\infty} u^{3/2} e^{-u} + \frac{1}{1-\delta}
\int_0^{\infty} u^{1/2}e^{-u} \d u.
\end{align*}
Hence \eqref{gamma}, and the statement of the lemma follows.

\end{proof}

\end{document}